\documentclass[11pt]{amsart}

\usepackage{amsmath}
\usepackage{amsfonts}
\usepackage{amssymb}
\usepackage{amsthm}
\usepackage{epsfig}
\usepackage{graphicx}
\usepackage{url}
\usepackage[colorlinks=true]{hyperref}
\usepackage{nicefrac}

\newtheorem{theorem}{Theorem}[section]
\newtheorem{lemma}{Lemma}[section]
\newtheorem{corollary}{Corollary}[section]

\newtheorem{remark}{Remark}[section]
\newtheorem{proposition}{Proposition}[section]

\DeclareMathOperator{\inter}{int}

\DeclareMathOperator{\iV}{V}
\DeclareMathOperator{\vol}{vol}

\DeclareMathOperator{\LE}{G}

\def\conv{\mathop\mathrm{conv}\nolimits}

\def\K{\mathcal{K}}
\def\P{\mathcal{P}}
\def\L{\mathcal{L}}

\def\Q{\mathcal{Q}}

\def\R{\mathbb{R}}

\def\Z{\mathbb{Z}}
\def\N{\mathbb{N}}

\def\e{\varepsilon}

\newcommand{\lin}{\mathrm{lin}}

\numberwithin{equation}{section}

\begin{document}

\title[A Blichfeldt inequality for centrally symmetric convex bodies]{A Blichfeldt-type inequality for centrally symmetric convex bodies}
\author{Matthias Henze}

\address{Fakult\"at f\"ur Mathematik, Otto-von-Guericke
Universit\"at Mag\-deburg, Universit\"atsplatz 2, D-39106 Magdeburg,
Germany}
\email{matthias.henze@ovgu.de}
\thanks{This work was supported by the Deutsche
  Forschungsgemeinschaft (DFG) within the project He 2272/4-1.}

\subjclass[2010]{52C07, 52B20, 52A40, 11H06}

\keywords{Blichfeldt-type inequalities, Davenport inequality, central symmetry, lattice point enumerator}

\begin{abstract}
In this note, we derive an asymptotically sharp upper bound on the number of lattice points in terms of the volume of centrally symmetric convex bodies. Our main tool is a generalization of a result of Davenport that bounds the number of lattice points in terms of volumes of suitable projections.
\end{abstract}

\maketitle

\section{Introduction}

Let $\K^n$ be the set of all convex bodies in $\R^n$, i.e., compact convex sets $K$ with nonempty interior $\inter K$. Such a body $K$ is called {\em centrally symmetric} if $K=-K$. The family of $n$-dimensional lattices in $\R^n$ is denoted by $\L^n$ and the usual Lebesgue measure with respect to the $n$-dimensional Euclidean space by $\vol_n(\cdot)$. If the ambient space is clear from the context, we omit the subscript and just write $\vol(\cdot)$. For a bounded subset $S\subset\R^n$ the lattice point enumerator is denoted by $\LE(S)=\#(S\cap\Z^n)$. A lattice polytope is a polytope all of whose vertices are lattice points in $\Z^n$. Finally, for an $A\subseteq\R^n$ we denote the dimension of its affine hull by $\dim A$.

We are interested in bounds on the volume in terms of the lattice point enumerator of a convex body. For $K\in\K^n$ with $\dim(K\cap\Z^n)=n$ a sharp lower bound on $\vol(K)$ was obtained by Blichfeldt \cite{Blichfeldt1921}, which reads \[\vol(K)\geq\frac1{n!}\left(\LE(K)-n\right).\]
We call results of this kind \emph{Blichfeldt-type inequalities}.
On the other hand, the best known upper bound on $\vol(P)$ for a lattice polytope $P\in\K^n$ is due to Pikhurko \cite{Pikhurko2001}
\[\vol(P)\leq(8n)^n15^{n2^{2n+1}}\LE(\inter P)\] and holds under the condition that $\LE(\inter P)\neq0$.
On the class of centrally symmetric convex bodies, Blichfeldt \cite{Blichfeldt1921} and van der Corput \cite{vdCorput1936} obtained a sharp upper bound on the volume
\begin{equation}
\vol(K)\leq2^{n-1}\left(\LE(\inter K)+1\right)\quad\textrm{for all}\quad K\in\K^n_0.\label{thm_blich_vdC}
\end{equation}

Bey, Henk and Wills \cite{BeyHenkWills2007} proposed the study of a reverse inequality also on the class of centrally symmetric convex bodies, and in \cite{HenkHenzeWills2011} the authors derive, as a first step, Blichfeldt-type inequalities for lattice crosspolytopes, lattice zonotopes, and for centrally symmetric planar convex sets. Moreover, they conjecture that there is a constant $c>1$ such that $\vol(K)\geq\frac{c^n}{n!}\LE(K)$ for every $K\in\K^n_0$ with $\dim(K\cap\Z^n)=n$.

In this work, we confirm this conjecture asymptotically by showing that for every $\e\in(0,1]$ and large enough $n\in\N$ a valid choice for this constant is $c=2-\e$. As the main ingredient to our argument, we prove the following generalization of a classical result of Davenport \cite{Davenport1951}. Therein, we denote by $\Q(P)$ the set of all lattice parallelepipeds in $\K^n$ whose edges are parallel to a given lattice parallelepiped $P\in\K^n$. Moreover, $K|L$ denotes the orthogonal projection of $K$ onto the subspace $L$ and $\binom{[n]}{i}$ is the set of all $i$-element subsets of $[n]=\{1,\dots,n\}$.

\begin{lemma}\label{lem_gen_Davenport}
Let $K\in\K^n$ and let $P=\sum_{j=1}^n[0,z_j]$ be a lattice parallelepiped. Then \[\LE(K)\leq\sum_{i=0}^n\sum_{J\in\binom{[n]}{i}}\vol_{n-i}(K|L_J^\perp)\vol_i(P_J),\] where $L_J=\lin\{z_j:j\in J\}$ and $P_J=\sum_{j\in J}[0,z_j]$ for each $J\in\binom{[n]}{i}$. Equality holds if and only if $P$ is a fundamental cell of $\Z^n$ and $K\in\Q(P)$.
\end{lemma}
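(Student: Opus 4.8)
The plan is to recognize that the entire right-hand side is nothing but the volume of the Minkowski sum $K+P$, and then to prove the resulting clean inequality $\LE(K)\le\vol_n(K+P)$ by a covering argument, reading off the equality case from when that argument is tight.

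The first step is to establish the identity
\[
\vol_n(K+P)=\sum_{i=0}^n\sum_{J\in\binom{[n]}{i}}\vol_{n-i}(K|L_J^\perp)\,\vol_i(P_J),
\]
which is the zonotope (Steiner-type) expansion for adding the segments $[0,z_1],\dots,[0,z_n]$ to $K$ one at a time. The only analytic input is the elementary formula $\vol_n(A+[0,z])=\vol_n(A)+|z|\,\vol_{n-1}(A|z^\perp)$ for a convex body $A$, proved by Fubini along direction $z$: the shadow $A|z^\perp$ is unchanged while every fibre grows in length by $|z|$. Iterating this and splitting the resulting terms according to whether the newly added index lies in $J$, I would induct on the number of added segments. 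Closing the induction requires two routine facts: that composing the orthogonal projection onto $z_n^\perp$ with a further orthogonal projection inside $z_n^\perp$ yields the projection onto $L_{J'\cup\{n\}}^\perp$ (orthogonal projections onto nested subspaces compose), and that $|z_n|\,\vol_{|J'|}(\sum_{j\in J'}[0,z_j|z_n^\perp])=\vol_{|J'|+1}(P_{J'\cup\{n\}})$, the latter being the base-times-height decomposition of a parallelepiped volume after shearing the edges into $z_n^\perp$.

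Next I would prove $\LE(K)\le\vol_n(K+P)$ by tiling. The vectors $z_1,\dots,z_n$ form a basis of the sublattice $\Lambda=\sum_j\Z z_j\subseteq\Z^n$, so $P$ is a fundamental domain of $\Lambda$ and $\Z^n$ splits into $m:=[\Z^n:\Lambda]=\vol_n(P)$ cosets of $\Lambda$; consequently $\sum_{p\in\Z^n}\mathbf 1_{p+P}=m$ almost everywhere. For $p\in K\cap\Z^n$ one has $p+P\subseteq K+P$, so the function $\sum_{p\in K\cap\Z^n}\mathbf 1_{p+P}$ is supported in $K+P$ and is pointwise at most $m$; integrating it gives $\LE(K)\,\vol_n(P)\le m\,\vol_n(K+P)$, and since $\vol_n(P)=m$ this is exactly the asserted bound.

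Finally, the equality case, which I expect to be the main obstacle. Tracing the two estimates, equality forces $\sum_{p\in K\cap\Z^n}\mathbf 1_{p+P}=m$ almost everywhere on $K+P$, i.e.\ for almost every $x\in K+P$ all $m$ lattice points $p$ with $x\in p+P$ must lie in $K$. Restricting to one coset $c+\Lambda$ then shows that the cells $p+P$ with $p\in K\cap(c+\Lambda)$ tile $K+P$ exactly. The key geometric lemma I would isolate is that a convex body which is, up to measure zero, a union of cells of a parallelepiped grid must be a box aligned to that grid. Applied to the $\Lambda$-grid shifted by $c$, this makes $K+P$ a box whose facets lie on the $c+\Lambda$ grid; applying it to a second coset $c'$ (which exists only if $m\ge 2$) would force the same facets to lie on the relatively shifted $c'+\Lambda$ grid, an impossibility. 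Hence $m=1$, so $P$ is a fundamental cell of $\Z^n$ and $K+P$ is a lattice box aligned to $P$; subtracting $P$ edgewise yields $K\in\Q(P)$. Conversely, when $P$ is a fundamental cell and $K\in\Q(P)$ the cells $p+P$ with $p\in K\cap\Z^n$ tile $K+P$ with no waste, so equality holds. The delicate points are the alignment lemma and the two-grid incompatibility, where care with boundary/measure-zero issues will be needed.
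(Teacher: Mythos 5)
Your proposal is correct, and its skeleton coincides with the paper's: both reduce the lemma to the single inequality $\LE(K)\le\vol(K+P)$, identify the right-hand side of the lemma with $\vol(K+P)$, and read off the equality case from the rigidity of the resulting tiling. The differences lie in how the two ingredients are justified. The paper obtains the expansion of $\vol(K+P)$ from mixed-volume theory: it writes $\vol(K+P)=\sum_{i=0}^n\binom{n}{i}\iV(K,n-i;P,i)$, splits each $\iV(K,n-i;P,i)$ by multilinearity into terms $i!\,\iV(K,n-i;[0,z_j],j\in J)$, and cites Gardner's formula (A.41) to convert these into $\vol_{n-i}(K|L_J^\perp)\vol_i(P_J)$; your inductive Fubini argument (adding one segment at a time, with the composition-of-projections and shearing observations) proves exactly the needed special case of that formula from scratch, making the proof self-contained at the cost of some bookkeeping. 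For the counting inequality, the paper proves a more general statement (its Lemma 2.1, valid for an arbitrary lattice tile $T$ with respect to a sublattice of $\Z^n$, which also answers a question of Betke and Wills), by summing $\#(K\cap\Lambda_j)=\vol\bigl((K\cap\Lambda_j)+T\bigr)/\vol(T)$ over residue classes; your indicator-function computation is the same count specialized to parallelepipeds. Finally, in the equality case the paper asserts, essentially without argument, that $(K\cap\Lambda_j)+P=K+P$ for all residue classes forces the index $m=\det\Lambda$ to be $1$; your two-grid incompatibility step (a box aligned to the grid of one coset cannot also be aligned to the grid of a second coset shifted by a non-lattice vector) supplies precisely this missing justification, so on that point your plan is actually more detailed than the paper's, provided you carry out the alignment lemma and the upgrade from almost-everywhere covering to exact tiling with the care you already flag.
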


As mentioned above, we use the preceding result to derive a Blichfeldt-type inequality for centrally symmetric convex bodies and thereby confirm conjectured bounds from \cite[Conj.~1.1]{BeyHenkWills2007} and \cite{HenkHenzeWills2011} asymptotically.

\begin{theorem}\label{thm_appl_gen_Davenport}
For every $\e\in(0,1]$ there exists an $n(\e)\in\N$ such that for every $n\geq n(\e)$ and every $K\in\K^n_0$ with $\dim(K\cap\Z^n)=n$, we have
\[\vol(K)\geq\frac{(2-\e)^n}{n!}\LE(K).\]
The constant $2$ cannot be replaced by a bigger one.
\end{theorem}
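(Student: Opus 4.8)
The plan is to apply Lemma~\ref{lem_gen_Davenport} to a parallelepiped spanned by lattice points of $K$ itself and then to bound every summand solely in terms of $\vol(K)$. Since $K$ is centrally symmetric with nonempty interior we have $0\in\inter K\cap\Z^n$, and $\dim(K\cap\Z^n)=n$ forces $\aff(K\cap\Z^n)=\R^n$; hence I may fix linearly independent lattice points $a_1,\dots,a_n\in K\cap\Z^n$. Put $P=\sum_{j=1}^n[0,a_j]$, a lattice parallelepiped, and keep the notation $L_J=\lin\{a_j:j\in J\}$ and $P_J=\sum_{j\in J}[0,a_j]$ from the lemma. By central symmetry $\pm a_j\in K$, so for each $J\in\binom{[n]}{i}$ the cross-polytope $\conv\{\pm a_j:j\in J\}$ is contained in $K\cap L_J$, which yields the section bound
\[\vol_i(K\cap L_J)\ \ge\ \vol_i\bigl(\conv\{\pm a_j:j\in J\}\bigr)\ =\ \frac{2^i}{i!}\,\vol_i(P_J).\]

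First I would invoke Lemma~\ref{lem_gen_Davenport} for this $P$. The decisive point is that the weights $\vol_i(P_J)$ occurring there are exactly matched by the section bound once it is paired with the classical Rogers--Shephard projection--section inequality $\vol_i(K\cap L_J)\,\vol_{n-i}(K|L_J^\perp)\le\binom{n}{i}\vol(K)$. Combining the two estimates makes $\vol_i(P_J)$ cancel and leaves
\[\vol_{n-i}(K|L_J^\perp)\,\vol_i(P_J)\ \le\ \frac{\binom{n}{i}\,i!}{2^i}\,\vol(K)\ =\ \frac{n!}{(n-i)!\,2^i}\,\vol(K),\]
a bound uniform over $J$ and independent of the position of $K$. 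Summing over the $\binom{n}{i}$ subsets of each size and then over $i$ gives
\[\LE(K)\ \le\ \vol(K)\sum_{i=0}^n\binom{n}{i}\frac{n!}{(n-i)!\,2^i}.\]

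It then remains to show that the right-hand sum is at most $\tfrac{n!}{(2-\e)^n}$ once $n$ is large. Substituting $m=n-i$ and extracting $n!/2^n$, the sum equals $\frac{n!}{2^n}\sum_m\frac{n!}{(n-m)!}\frac{2^m}{(m!)^2}$, and the crude estimate $\frac{n!}{(n-m)!}\le n^m$ bounds the inner series by $\sum_{m\ge0}\frac{(2n)^m}{(m!)^2}$, a modified-Bessel-type quantity of size $e^{O(\sqrt n)}$. Hence $\LE(K)\le\vol(K)\,\frac{n!}{2^n}\,e^{O(\sqrt n)}$, whereas the target is $\vol(K)\,\frac{n!}{2^n}\bigl(\tfrac{2}{2-\e}\bigr)^n$; since $\tfrac{2}{2-\e}>1$, the genuinely exponential factor dominates the sub-exponential loss for all $n\ge n(\e)$, and this threshold depends only on $\e$, not on $K$. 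I expect this comparison to be the quantitative crux: it is precisely because the Bessel factor is sub-exponential --- so that it is absorbed by $(2/(2-\e))^n$ but not by anything with base exceeding $2$ --- that the constant is pinned at $2$. The conceptual obstacle, by contrast, is the opening move of choosing $P$ to be spanned by lattice points of $K$, which is what produces the clean cancellation above.

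Finally, for the optimality of $2$ I would evaluate the statement on the lattice cross-polytope $C=\conv\{\pm e_1,\dots,\pm e_n\}\in\K^n_0$, which satisfies $\dim(C\cap\Z^n)=n$. Here $\vol(C)=2^n/n!$ while $\LE(C)=2n+1$, so for a constant $c$ the inequality $\vol(C)\ge\frac{c^n}{n!}\LE(C)$ reads $(2/c)^n\ge 2n+1$. For any fixed $c>2$ the left-hand side tends to $0$ and the right-hand side to $\infty$, so the inequality fails for all large $n$; thus $2$ cannot be replaced by any larger constant.
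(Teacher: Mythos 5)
Your proposal is correct and follows essentially the same route as the paper's proof: you apply Lemma~\ref{lem_gen_Davenport} to a parallelepiped spanned by $n$ linearly independent lattice points of $K$, lower-bound the sections $\vol_i(K\cap L_J)$ by the crosspolytopes $\conv\{\pm a_j: j\in J\}$, and finish with Lemma~\ref{lem_volume_inequ}, so that your sum $\sum_{i=0}^n\binom{n}{i}\frac{n!}{(n-i)!\,2^i}$ is exactly the paper's $\frac{n!}{2^n}L_n(2)$. The only (harmless) deviations are that you bound this sum by an elementary modified-Bessel-type estimate of size $e^{O(\sqrt{n})}$ where the paper invokes Szeg\H{o}'s asymptotics for the Laguerre polynomial $L_n(2)$, and that you certify optimality of the constant $2$ with the standard crosspolytope $C_n^\star$ rather than the paper's stretched family $C_{n,l}^\star$; both choices suffice.
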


As an application of this inequality we bound the magnitude $\frac{\LE(K)}{\LE(K^\star)\vol(K)}$ by constants that depend on the dimension $n$ but not on the body $K$. Recall that $K^\star=\{x\in\R^n:x^\intercal y\leq1,\,\forall y\in K\}$ denotes the polar body of $K\in\K^n_0$. Estimates of such kind were first studied and applied by Gillet and Soul\'{e}~\cite{GilletSoule1991} who obtained $6^{-n}\leq\frac{\LE(K)}{\LE(K^\star)\vol(K)}\leq\frac{6^nn!}{c^n}$ for some absolute constant $c\leq4$.

\begin{theorem}\label{thm_GillSoul}
For every $\e>0$ there exists an $n(\e)\in\N$ such that for every $n\geq n(\e)$ and every $K\in\K^n_0$ with $\dim(K\cap\Z^n)=n$, we have
\[(\pi+\e)^{-n}\leq\frac{\LE(K)}{\LE(K^\star)\vol(K)}\leq\frac{(\pi+\e)^nn!}{c^n},\]
where $c\leq4$ is an absolute constant.
\end{theorem}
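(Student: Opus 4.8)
The plan is to combine Theorem~\ref{thm_appl_gen_Davenport}, which converts lattice point counts into volumes for bodies with $\dim(K\cap\Z^n)=n$, with the Blaschke--Santal\'o inequality $\vol(K)\vol(K^\star)\le\beta_n^2$, where $\beta_n:=\vol_n(B)$ is the volume of the Euclidean unit ball $B\subset\R^n$. The constant $\pi$ will emerge purely from Stirling's formula: one has $(\beta_n^2\,n!)^{1/n}\to2\pi$, and dividing by the factor $(2-\e)^n$ supplied by Theorem~\ref{thm_appl_gen_Davenport} turns $2\pi$ into a quantity tending to $\pi$ as $\e\downarrow0$. The absolute constant $c\le4$ is just the Mahler-volume lower-bound constant inherited from Gillet--Soul\'e; as will be seen, it plays no essential role in my argument.

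For the upper bound I would argue directly. Since $\dim(K\cap\Z^n)=n$, Theorem~\ref{thm_appl_gen_Davenport} gives $\LE(K)\le\frac{n!}{(2-\e)^n}\vol(K)$ for $n\ge n(\e)$, while $0\in K^\star$ yields $\LE(K^\star)\ge1$. Hence
\[\frac{\LE(K)}{\LE(K^\star)\vol(K)}\le\frac{n!}{(2-\e)^n}=\frac{n!}{c^n}\left(\frac{c}{2-\e}\right)^n,\]
and since $c\le4$ we have $\frac{c}{2-\e}<\pi+\e$ for all sufficiently small $\e>0$, which already delivers the claimed upper bound with room to spare.

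The lower bound is the substantial part. Using $\LE(K)\ge1$, it suffices to show $\LE(K^\star)\vol(K)\le(\pi+\e)^n$ for large $n$. If $K^\star$ happened to satisfy $\dim(K^\star\cap\Z^n)=n$, then Theorem~\ref{thm_appl_gen_Davenport} applied to $K^\star$ would give $\LE(K^\star)\le\frac{n!}{(2-\e')^n}\vol(K^\star)$, and Blaschke--Santal\'o would yield
\[\LE(K^\star)\vol(K)\le\frac{n!}{(2-\e')^n}\vol(K^\star)\vol(K)\le\frac{n!}{(2-\e')^n}\beta_n^2.\]
By the Stirling asymptotics above, $\bigl(\tfrac{n!}{(2-\e')^n}\beta_n^2\bigr)^{1/n}\to\frac{2\pi}{2-\e'}$, which is strictly below $\pi+\e$ once $\e'$ is chosen small enough in terms of $\e$; for $n$ large the displayed quantity is then at most $(\pi+\e)^n$, as required.

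The main obstacle is precisely that $K^\star$ need not be a lattice body: even though $\dim(K\cap\Z^n)=n$, the polar may satisfy $\dim(K^\star\cap\Z^n)=k<n$, so Theorem~\ref{thm_appl_gen_Davenport} does not apply to it verbatim, and I expect this to be the heart of the argument. One way around it is to pass to the subspace $U=\lin(K^\star\cap\Z^n)$, note that $\LE(K^\star)=\LE(K^\star\cap U)$, apply the $k$-dimensional Blichfeldt-type bound to the lattice body $K^\star\cap U$ with respect to the sublattice $\Z^n\cap U$, and then control the resulting $k$-dimensional volume against $1/\vol(K)$ via a Fubini-type lower bound for $\vol_n(K^\star)$ through a section and a projection. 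An alternative, and probably cleaner, route is to bound $\LE(K^\star)$ through its successive minima by the Betke--Henk--Wills inequality (which contributes a factor $1$ in the \emph{flat} directions where $\lambda_i=\infty$) and to transfer these minima to those of $K$ via Minkowski's second theorem and Mahler's transference theorem. Either way, the remaining work is bookkeeping: checking that these lower-dimensional or successive-minima estimates still collapse, under Stirling's formula, to the same constant $\pi$, and tracking how the parameter $\e'$ and the threshold $n(\e')$ from Theorem~\ref{thm_appl_gen_Davenport} combine with the target $\e$.
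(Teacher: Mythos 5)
Your upper bound is correct, and it is in fact a shortcut compared to the paper: Theorem~\ref{thm_appl_gen_Davenport} plus the trivial bound $\LE(K^\star)\geq1$ give $\frac{\LE(K)}{\LE(K^\star)\vol(K)}\leq\frac{n!}{(2-\e')^n}$ for large $n$, and choosing $\e'$ small enough that $c\leq4<(2-\e')(\pi+\e)$ makes this at most $\frac{(\pi+\e)^nn!}{c^n}$. The paper instead divides its bound $\LE(K)\vol(K^\star)\leq(\pi+\e)^n$ (Corollary~\ref{cor_Gvol_bound}) by $\vol(K)\vol(K^\star)$ and invokes Bourgain--Milman; both routes work.

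The lower bound, however, has a fatal gap at its first step. You reduce, via $\LE(K)\geq1$, to the claim $\LE(K^\star)\vol(K)\leq(\pi+\e)^n$ for all $K\in\K^n_0$ with $\dim(K\cap\Z^n)=n$. This claim is false: for $K=[-N,N]^n$ we have $\dim(K\cap\Z^n)=n$ and $K^\star=\frac{1}{N}\conv\{\pm e_1,\dots,\pm e_n\}$, hence $\LE(K^\star)=1$ and $\LE(K^\star)\vol(K)=(2N)^n\to\infty$. Your claim is exactly Corollary~\ref{cor_Gvol_bound} applied to $K^\star$, so it does hold when $\dim(K^\star\cap\Z^n)=n$; it fails precisely in the degenerate case $k=\dim\lin(K^\star\cap\Z^n)<n$ that you yourself single out as the main obstacle. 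Consequently no Fubini-type or transference refinement of the second half of your sketch can rescue the argument: the statement you reduced to is simply not true. What is lost in replacing $\LE(K)$ by $1$ is the very term that must compensate a large $\vol(K)$ (in the example, $\LE(K)=(2N+1)^n$ restores the ratio). The missing idea is a volume \emph{lower} bound for $\LE(K)$: with $L=\lin(K^\star\cap\Z^n)$, every $x\in\inter K\cap\Z^n$ and $y\in K^\star\cap\Z^n$ satisfy $|x^\trans y|<1$ and hence $x^\trans y=0$, so $\inter K\cap\Z^n\subseteq L^\perp$, and Inequality~\eqref{thm_blich_vdC} applied in the sublattice $\Z^n\cap L^\perp$ yields
\[\LE(K)\geq\frac{1}{2^{n-k}}\frac{\vol_{n-k}(K\cap L^\perp)}{\det(\Z^n\cap L^\perp)}.\]
Combining this with your (sound) plan of estimating $\LE(K^\star)=\LE(K^\star\cap L)$ in the sublattice $\Z^n\cap L$, via $(K^\star\cap L)^\star=K|L$ and $(\Z^n\cap L)^\star=\Z^n|L$, and with Lemma~\ref{lem_volume_inequ} in the form $\vol_{n-k}(K\cap L^\perp)\vol_k(K|L)\geq\vol(K)$, the section/projection volumes cancel against $\vol(K)$ and the lattice determinants cancel by duality, leaving $\frac{\LE(K)}{\LE(K^\star)\vol(K)}\geq\frac{4^k}{2^n\,k!\,\kappa_k^2\,L_k(2)}$; a final monotonicity argument in $k$ reduces everything to the case $k=n$, where the Stirling computation from Corollary~\ref{cor_Gvol_bound} produces the constant $(\pi+\e)^{-n}$.
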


The results of Lemma \ref{lem_gen_Davenport} and Theorem \ref{thm_appl_gen_Davenport} are discussed in the subsequent section. In Section \ref{sect_3}, we give the details of the proof of Theorem \ref{thm_GillSoul}.

\section{Proofs of Lemma \ref{lem_gen_Davenport} and Theorem \ref{thm_appl_gen_Davenport}}

A convex body $T\in\K^n$ is said to be a \emph{lattice tile with respect to the lattice $\Lambda\in\L^n$} if $T$ tiles $\R^n$ by vectors in $\Lambda$, that is, $\R^n=\Lambda+T$ and $(x+\inter T)\cap(y+\inter T)=\emptyset$, for all different $x,y\in\Lambda$. It is well-known that lattice tiles are polytopes, and thus we can assume that every lattice tile has the origin as a vertex. For a survey on tilings and references to the relevant literature, we refer the reader to~\cite{Schulte1993}.

Betke and Wills~\cite{BetkeWills1979} (cf.~\cite[Sect.~3]{GritzWills1993}) showed that, for every convex body $K\in\K^n$, the number of lattice points in $K$ is bounded by $\LE(K)\leq\vol(K+L)$, where $L$ is a fundamental cell of $\Z^n$. They asked to determine all bodies $L$ that admit such an inequality. With the following lemma, we identify lattice tiles as bodies with this property.

\begin{lemma}\label{lem_G_vol_Z_inequ}
Let $K\in\K^n$ and let $T$ be a lattice tile with respect to a sublattice $\Lambda$ of $\Z^n$. Then \[\LE(K)\leq\vol(K+T).\]
If $T$ is a lattice parallelepiped $P$, then equality holds if and only if $\Lambda=\Z^n$ and $K\in\Q(P)$.
\end{lemma}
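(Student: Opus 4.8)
The plan is to prove the inequality by decomposing $\Z^n$ into cosets of $\Lambda$ and then to read off the equality case by tracking exactly where the estimate is tight. For the inequality, I would first record that a lattice tile $T$ with respect to $\Lambda$ is a fundamental domain, so $\vol(T)=\det\Lambda=[\Z^n:\Lambda]=:d$ (using $\det\Z^n=1$). Choosing coset representatives $v_1,\dots,v_d$ with $\Z^n=\bigcup_{i=1}^d(v_i+\Lambda)$, I would fix a coset $v_i+\Lambda$ and consider the translates $\{p+T:p\in K\cap(v_i+\Lambda)\}$. Each such translate lies in $K+T$ because $p\in K$, and any two of them have disjoint interiors, since their base points differ by a nonzero vector of $\Lambda$ and $T$ tiles under $\Lambda$. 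Hence $\#(K\cap(v_i+\Lambda))\cdot\vol(T)\le\vol(K+T)$, i.e. $\#(K\cap(v_i+\Lambda))\le\vol(K+T)/d$, and summing over the $d$ cosets gives $\LE(K)=\sum_{i=1}^d\#(K\cap(v_i+\Lambda))\le\vol(K+T)$.

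For the equality case with $T=P=\sum_{j=1}^n[0,z_j]$, I would first note that $P$ tiles under $\Lambda$ precisely when $z_1,\dots,z_n$ is a basis of $\Lambda$, so that $\vol(P)=d$ and $z_j\in\Z^n$. The ``if'' direction is a direct computation: writing $K=w+\sum_j[0,a_jz_j]$ with $\Lambda=\Z^n$ gives $\LE(K)=\prod_j(a_j+1)=\vol(K+P)$. For the converse, the summation above is tight only if every coset is individually tight, i.e. $\#(K\cap(v_i+\Lambda))=\vol(K+P)/d$ for all $i$; geometrically this means that for each coset the disjoint translates $\{p+P:p\in K\cap(v_i+\Lambda)\}$ fill $K+P$ up to a null set, and hence, being a finite union of closed cells inside the convex body $K+P$, tile $K+P$ exactly.

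The structural heart of the argument is then to exploit these exact tilings. Using the coset of $\Lambda$ itself, $K+P$ equals a finite union of $\Lambda$-translates of $P$; being convex and bounded by grid hyperplanes (in the coordinates sending $z_j$ to $e_j$), it must be a box $c+\sum_j[0,m_jz_j]$ with $c\in\Lambda$ and integers $m_j\ge1$. A Minkowski cancellation argument, comparing coordinate projections and matching the $2^n$ vertices, then forces $K=c+\sum_j[0,(m_j-1)z_j]$, a lattice parallelepiped with edges parallel to $P$; full-dimensionality gives $m_j\ge2$, so $K\in\Q(P)$. To see finally that $\Lambda=\Z^n$, I would suppose $d\ge2$ and bring in a second coset $v_2+\Lambda$: its exact tiling of the box $K+P$ by translates of $P$ must coincide with the unique tiling of a box by unit cells, whose base points lie in $c+\Lambda=\Lambda$, contradicting that the base points of the second tiling lie in the nontrivial coset $v_2+\Lambda$. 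Hence $d=1$.

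I expect the main obstacle to be this converse, and within it the two rigidity facts it rests on: that a convex set which is a finite union of lattice cells must be an axis-aligned box, and that a box admits only the grid tiling by translates of a single cell. By contrast, the Minkowski cancellation recovering $K$ from $K+P$ is routine once the box shape is in hand, and the inequality itself is immediate from the coset decomposition.
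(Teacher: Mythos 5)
Your packing argument for the inequality and your reduction of the equality case to exact tilings coset by coset are correct and essentially identical to the paper's proof. The gap is in the structural step of the converse, and it is concrete: the claim that ``$P$ tiles under $\Lambda$ precisely when $z_1,\dots,z_n$ is a basis of $\Lambda$'' is false. A lattice parallelepiped can tile with respect to a sublattice of $\Z^n$ that is \emph{not} generated by its edge vectors: take $n=2$, $P=[0,2]\times[0,1]$ (so $z_1=(2,0)$, $z_2=(0,1)$) and let $\Lambda$ be the lattice generated by $(2,0)$ and $(1,1)$, i.e.\ $\Lambda=\{(x,y)\in\Z^2:x\equiv y \bmod 2\}$; the $\Lambda$-translates of $P$ form the brick-laying pattern, which is a genuine lattice tiling, yet $z_1,z_2$ is not a basis of $\Lambda$. (The facts you actually extract from the claim at that point, $\vol(P)=d$ and $z_j\in\Z^n$, are true for independent reasons.) The real damage comes later: in the coordinates sending $z_j\mapsto e_j$, the lattice $\Lambda$ need not map to $\Z^n$, so the $\Lambda$-translates of $P$ need not be cells of the integer grid, and your assertions that $K+P$ is ``bounded by grid hyperplanes'' and that the base points of the unique tiling lie in ``$c+\Lambda=\Lambda$'' are unjustified as written. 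Since the entire converse rests on this grid picture, this is a genuine gap.

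It is repairable, and the repair is what the paper does. First, the box shape does not need the grid: $K+P=(K\cap\Lambda)+P$ is a convex polytope which is a finite union of translates of $P$, so every facet of it lies in facets of these translates and is therefore parallel to a facet of $P$; a bounded convex polytope all of whose facet normals come from a parallelepiped is itself a parallelepiped with edges parallel to the $z_j$. Second, your two-coset argument for $\Lambda=\Z^n$ survives once it is phrased without the grid: by the uniqueness of tilings of a box by translates of a single parallelepiped cell, the base-point sets of the tilings coming from the cosets $\Lambda$ and $v_2+\Lambda$ would have to coincide, yet one lies in $\Lambda$ and the other in the disjoint coset $v_2+\Lambda$, a contradiction. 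Note that the paper runs these steps in the opposite order and in greater generality: from $(K\cap\Lambda_j)+T=K+T$ for \emph{every} residue class it first deduces, for an arbitrary lattice tile $T$, that there can be only one residue class (for instance, taking convex hulls and cancelling $T$ forces $\conv(K\cap\Lambda_1)=\conv(K\cap\Lambda_2)$, whose vertices lie in disjoint cosets), hence $\Lambda=\Z^n$; only then does it analyze the parallelepiped, at which point $\vol(P)=\det\Z^n=1$ forces $z_1,\dots,z_n$ to be a basis of $\Z^n$, so the grid picture you wanted becomes legitimate for free.
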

\begin{proof}
Since for all $x,y\in\Lambda$ we have $(x+\inter T)\cap(y+\inter T)=\emptyset$, unless $x=y$, every residue class modulo $\Lambda$ is a packing set of $T$. Let $\{r_1,\dots,r_m\}\subset\Z^n$ be a maximal subset of different representatives of residue classes modulo~$\Lambda$. Writing $\Lambda_j=r_j+\Lambda$, we have for every $j=1,\dots,m$ that \[\#(K\cap\Lambda_j)=\frac{\vol\left((K\cap\Lambda_j)+T\right)}{\vol(T)}.\]
Since $T$ is a lattice tile, we have $\vol(T)=\det\Lambda=m$, and therefore \[\LE(K)=\sum_{j=1}^m\#(K\cap\Lambda_j)=\frac1{m}\sum_{j=1}^m\vol\left((K\cap\Lambda_j)+T\right)\leq\vol(K+T).\]
By the compactness of the involved sets, equality is attained if and only if $(K\cap\Lambda_j)+T=K+T$ for all $j=1,\dots,m$. In particular, there can only be one residue class and thus $m=\det\Lambda=1$, which means $\Lambda=\Z^n$. In the case that the lattice tile is a lattice parallelepiped $P=\sum_{i=1}^n[0,a_i]$, every hyperplane supporting a facet of the convex polytope $K+P=(K\cap\Z^n)+P$ is parallel to a hyperplane supporting a facet of $P$. Therefore, $K+P$ is a lattice translate of $\sum_{i=1}^n[0,t_ia_i]$ for some $t_i\in\N$, and so $K$ is a lattice translate of $\sum_{i=1}^n[0,(t_i-1)a_i]\in\Q(P)$.

Conversely, if $P$ is a fundamental cell of $\Z^n$, then we find lattice vectors $v_1,\ldots,v_n\in\Z^n$ such that, up to a lattice translation, $P=\sum_{i=1}^n[0,v_i]$. Again, up to a lattice translation, every $K\in\Q(P)$ is of the form $K=\sum_{i=1}^n[0,l_iv_i]$ for some $l_1,\ldots,l_n\in\N$. Since $P$ is a fundamental cell, we have $\vol(P)=1=\#\big(\sum_{i=1}^n[0,v_i)\cap\Z^n\big)$ and thus
\[\LE(K)=\#\left(\sum_{i=1}^n[0,(l_i+1)v_i)\cap\Z^n\right)=\vol(K+P).\qedhere\]
\end{proof}

\begin{proof}[Proof of Lemma \ref{lem_gen_Davenport}]
The lattice parallelepiped $P=\sum_{j=1}^n[0,z_j]$ is clearly a lattice tile with respect to the sublattice of $\Z^n$ which is spanned by $z_1,\dots,z_n$. Based on an alternative proof by Ulrich Betke of an inequality of Davenport \cite{Davenport1951} we use Lemma \ref{lem_G_vol_Z_inequ} and develop the volume of $K+P$ into a sum of the mixed volumes $\iV(K,n-i;P,i)$ of $K$ and $P$ (we refer to the books of Gardner~\cite{Gardner1995} and Schneider \cite{Schneider1993} for details and properties on mixed volumes)
\begin{eqnarray}
\LE(K)&\leq&\vol(K+P)=\sum_{i=0}^n\binom{n}{i}\iV(K,n-i;P,i).\label{eqn_1}
\end{eqnarray}
By the linearity and nonnegativity of the mixed volumes we have
\begin{eqnarray}
\iV(K,n-i;P,i)&=&\sum_{j_1=1}^n\dots\sum_{j_i=1}^n\iV(K,n-i;[0,z_{j_1}],\dots,[0,z_{j_i}])\nonumber\\
&=&\sum_{J\in\binom{[n]}{i}}i!\iV(K,n-i;[0,z_j],j\in J),\label{eqn_2}
\end{eqnarray}
and by Equation (A.41) in \cite[App.~A.5]{Gardner1995} it holds
\begin{eqnarray}
\binom{n}{i}i!\iV(K,n-i;[0,z_j],j\in J)&=&\vol_{n-i}(K|L_J^\perp)\vol_i(P_J),\label{eqn_3}
\end{eqnarray}
for any $J\in\binom{[n]}{i}$. Finally, combining (\ref{eqn_1}), (\ref{eqn_2}) and (\ref{eqn_3}) gives the desired result.\par
The equality characterization is inherited from Lemma \ref{lem_G_vol_Z_inequ} since (\ref{eqn_1}) is the only step where there could be an inequality.
\end{proof}

For later reference, we state the following lemma that can be found for example in \cite[Thm.~1]{RogersShephard1958} and \cite[Lem.~3.1]{BourgainMilman1987}.

\begin{lemma}\label{lem_volume_inequ}
Let $K\in\K^n_0$ and let $L$ be an $i$-dimensional linear subspace of $\R^n$. Then \[\vol(K)\leq\vol_{n-i}(K|L^\perp)\vol_i(K\cap L)\leq\binom{n}{i}\vol(K),\] and both inequalities are best possible.
\end{lemma}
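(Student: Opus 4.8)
The plan is to slice $K$ by the affine subspaces parallel to $L$ and to reduce both inequalities to the Brunn--Minkowski inequality applied fibrewise. Writing $E=L$ and $F=L^\perp$ and, for $v\in F$, denoting the translated slice $K_v=\{u\in E:u+v\in K\}\subseteq E$, Fubini's theorem gives
\[\vol(K)=\int_{K|L^\perp}\vol_i(K_v)\,dv,\]
so everything reduces to the slice function $f(v)=\vol_i(K_v)$ on the projection $B=K|L^\perp$. Since $K$ is convex we have $K_{(1-t)v_0+tv_1}\supseteq(1-t)K_{v_0}+tK_{v_1}$, whence by Brunn--Minkowski the function $g=f^{1/i}$ is concave on $B$. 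Because $K=-K$ we also have $K_{-v}=-K_v$, so $f$ is even; moreover $f(0)=\vol_i(K\cap L)$ and $0\in\inter B$.

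For the left inequality I would show that the central slice is the largest. Convexity gives $\tfrac12K_v+\tfrac12K_{-v}\subseteq K_0$, and Brunn--Minkowski together with $f(-v)=f(v)$ yields $f(0)^{1/i}\geq f(v)^{1/i}$, i.e. $\vol_i(K_v)\leq\vol_i(K\cap L)$ for every $v$. Integrating the Fubini identity over $B$ then produces $\vol(K)\leq\vol_i(K\cap L)\,\vol_{n-i}(K|L^\perp)$. Equality forces $f$ to be constant on $B$, which happens precisely for cylinders $K=(K\cap L)\times(K|L^\perp)$; this shows the bound is best possible. It is exactly here that central symmetry is indispensable, since for a general body the maximal slice need not pass through the origin.

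The right inequality is the Rogers--Shephard section--projection estimate and is the step I expect to be the main obstacle. Here I would use the concavity of $g=f^{1/i}$ quantitatively. Writing $\|\cdot\|_B$ for the Minkowski gauge of the centrally symmetric body $B$, every $v\in B$ can be written as $v=\|v\|_B\,w$ with $w\in\partial B$, so concavity and $g\geq0$ give the pointwise lower bound
\[g(v)\geq(1-\|v\|_B)\,g(0)+\|v\|_B\,g(w)\geq(1-\|v\|_B)\,\vol_i(K\cap L)^{1/i}.\]
Raising to the $i$-th power and integrating in polar coordinates adapted to $B$ reduces the estimate to the one-dimensional Beta integral $\int_0^1(1-s)^i s^{n-i-1}\,ds=\tfrac{(n-i-1)!\,i!}{n!}$, which after multiplication by $(n-i)\vol_{n-i}(B)$ contributes exactly the factor $\binom{n}{i}^{-1}$, yielding $\vol(K)\geq\binom{n}{i}^{-1}\vol_i(K\cap L)\,\vol_{n-i}(K|L^\perp)$. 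The pointwise bound is tight precisely when $g$ is the linear cone $(1-\|v\|_B)\vol_i(K\cap L)^{1/i}$, corresponding to the centrally symmetric join $K=\conv\big((K\cap L)\cup(K|L^\perp)\big)$ of the section and the projection, for which a direct computation recovers equality; this establishes sharpness. The only genuinely delicate points are verifying the fibrewise concavity through Brunn--Minkowski and correctly bookkeeping the Beta-function constant so that it collapses to $\binom{n}{i}^{-1}$.
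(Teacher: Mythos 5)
Your proof is correct, but there is nothing in the paper to compare it against: the paper does not prove this lemma at all, it quotes it from the literature (citing \cite[Thm.~1]{RogersShephard1958} and \cite[Lem.~3.1]{BourgainMilman1987}) and uses it as a black box. What you have reconstructed is essentially the classical argument behind those references. The skeleton is sound: Fubini over $B=K|L^\perp$; Brunn's concavity principle giving concavity of $v\mapsto\vol_i(K_v)^{1/i}$; central symmetry forcing the central slice to be maximal via $\tfrac12K_v+\tfrac12K_{-v}\subseteq K_0$, which yields the left inequality (and this is indeed the only place where $K=-K$ is needed — the right inequality is the Rogers--Shephard estimate valid for arbitrary convex bodies meeting $L$); and the cone-type pointwise bound $f(v)\geq(1-\|v\|_B)^i f(0)$ integrated against the gauge of $B$, where the constant
\[(n-i)\int_0^1(1-s)^i s^{\,n-i-1}\,ds=\frac{(n-i)!\,i!}{n!}=\binom{n}{i}^{-1}\]
comes out exactly as you say. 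Your sharpness examples also check out: products $(K\cap L)\times(K|L^\perp)$ give equality on the left, and joins $\conv\bigl((K\cap L)\cup(K|L^\perp)\bigr)$ (e.g.\ the crosspolytope with a coordinate subspace) give equality on the right, since there $K_v=(1-\|v\|_B)(K\cap L)$ makes your pointwise bound an identity.

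Two small caveats, neither of which affects the assertion that the bounds are best possible. First, your claim that equality on the left holds \emph{precisely} for cylinders is too strong: equality only forces all slices $K_v$ to have volume $f(0)$, which is also satisfied by obliquely sheared cylinders (e.g.\ $\{(x,z)\in\R^2:|x|\leq1,\ |z-x|\leq1\}$ with $L$ the $z$-axis), so orthogonal products are not characterized. Second, your argument implicitly assumes $1\leq i\leq n-1$ (the exponents $1/i$ and $n-i-1$); the boundary cases are trivial but should be noted if you want a complete write-up.
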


\begin{proof}[Proof of Theorem \ref{thm_appl_gen_Davenport}]
By assumption, we find $n$ linearly independent lattice points $z_1,\dots,z_n$ inside $K$. Applying Lemma \ref{lem_gen_Davenport} with respect to the lattice parallelepiped $P=\sum_{j=1}^n[0,z_j]$ gives
\[\LE(K)\leq\sum_{i=0}^n\sum_{J\in\binom{[n]}{i}}\vol_{n-i}(K|L_J^\perp)\vol_i(P_J).\]
By the construction of the subspaces $L_J$, we have
\[\vol_i(K\cap L_J)\geq\vol_i\big(\conv\{\pm z_j:j\in J\}\big)=\frac{2^i}{i!}\vol_i(P_J),\]
and together with Lemma \ref{lem_volume_inequ} we get
\begin{eqnarray}
\LE(K)&\leq&\sum_{i=0}^n\sum_{J\in\binom{[n]}{i}}\frac{i!}{2^i}\vol_{n-i}(K|L_J^\perp)\vol_i(K\cap L_J)\nonumber\\
&\leq&\vol(K)\sum_{i=0}^n\binom{n}{i}^2\frac{i!}{2^i}=\vol(K)\frac{n!}{2^n}L_n(2),\label{eqn_ch3_exact_sym_blich}
\end{eqnarray}
where $L_n(x)=\sum_{k=0}^n\binom{n}{k}\frac{x^k}{k!}$ denotes the $n$th Laguerre polynomial\index{polynomial!Laguerre}. For two functions $f,g:\N\to\R$, we denote by $f(n)\approx g(n)$ that $\lim_{n\to\infty}\frac{f(n)}{g(n)}=1$. In Szeg{\H{o}}'s book~\cite[p.~199]{Szego1975} one finds the approximation
\[L_n(x)\approx\frac{n^{-\frac14}}{2\sqrt{\pi}}\frac{e^{-\frac{x}2}}{x^{\frac14}}e^{2\sqrt{x(n+\frac12)}}\quad\textrm{ for all fixed }\quad x>0.\]
Therefore, by $\lim_{n\to\infty}e^{\frac{2\sqrt{2n+1}}{n}}=1$, we have
\begin{eqnarray}
\frac{L_n(2)}{2^n}&\approx&\frac1{2e\sqrt{\pi}\sqrt[4]{2n}}\frac{e^{2\sqrt{2n+1}}}{2^n}<\frac{e^{2\sqrt{2n+1}}}{2^n}\leq\frac1{(2-\e)^n}\label{eqn_ch3_lag_approx}
\end{eqnarray}
for every $\e\in(0,1]$ and large enough $n\in\N$. Hence, for large enough $n$, we arrive at
\[\LE(K)\leq\vol(K)\frac{n!}{2^n}L_n(2)\leq\vol(K)\frac{n!}{(2-\e)^n}.\]

In order to see that this inequality is asymptotically sharp, we consider the crosspolytope $C_{n,l}^\star=\conv\{\pm le_1,\pm e_2,\dots,\pm e_n\}$. We have $\LE(C_{n,l}^\star)=2(n+l)-1$ and $\vol(C_{n,l}^\star)=\frac{2^n}{n!}l$. Therefore, $\sqrt[n]{\frac{n!\vol(C_{n,l}^\star)}{\LE(C_{n,l}^\star)}}=\sqrt[n]{\frac{2^nl}{2(n+l)-1}}$ tends to $2$ when $l$ and $n$ tend to infinity. On the other hand, the above inequality shows that for every $\e\in(0,1]$ we have $\sqrt[n]{\frac{n!\vol(K)}{\LE(K)}}\geq 2-\e$ for $n\to\infty$.
\end{proof}

\section{An application of Theorem \ref{thm_appl_gen_Davenport}}\label{sect_3}

In this section, we give the details of the proof of Theorem \ref{thm_GillSoul}. We proceed by showing that Theorem \ref{thm_appl_gen_Davenport} can be used to derive an improvement of \[\LE(K)\vol(K^\star)\leq6^n.\]
This inequality is due to Gillet and Soul\'{e} \cite{GilletSoule1991} and holds for every $K\in\K^n_0$ with $\dim(K\cap\Z^n)=n$.
The volume of the Euclidean unit ball $B_n$ is denoted by
\[\kappa_n=\vol(B_n)=\frac{\pi^{\frac{n}2}}{\Gamma(\frac{n}2+1)},\]
where $\Gamma(z)=\int_0^\infty e^{-t}t^{z-1}dt$ is the gamma function (cf.~\cite[p.~13]{Gardner1995}).

\begin{corollary}\label{cor_Gvol_bound}
For every $\e>0$ there exists an $n(\e)\in\N$ such that for every $n\geq n(\e)$ and every $K\in\K^n_0$ with $\dim(K\cap\Z^n)=n$, we have
\[\frac{c^n}{n!}\leq\LE(K)\vol(K^\star)\leq(\pi+\e)^n.\]
Here, $c\leq2$ is an absolute constant and the lower bound holds for every $n\in\N$ and arbitrary $K\in\K^n_0$.
\end{corollary}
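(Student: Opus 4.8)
The plan is to prove the two inequalities separately, using Theorem~\ref{thm_appl_gen_Davenport} for the upper bound and a direct volumetric argument for the lower bound.

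For the upper bound $\LE(K)\vol(K^\star)\leq(\pi+\e)^n$, I would start from Theorem~\ref{thm_appl_gen_Davenport}, which gives $\LE(K)\leq\frac{n!}{(2-\e')^n}\vol(K)$ for large $n$. Multiplying by $\vol(K^\star)$ reduces the problem to bounding the product $\vol(K)\vol(K^\star)$, the Mahler volume. The key input here is the Blaschke--Santal\'o inequality, $\vol(K)\vol(K^\star)\leq\kappa_n^2$, valid for every centrally symmetric $K\in\K^n_0$. Combining these yields $\LE(K)\vol(K^\star)\leq\frac{n!}{(2-\e')^n}\kappa_n^2$. The remaining work is asymptotic: using $\kappa_n=\pi^{n/2}/\Gamma(\frac n2+1)$ together with Stirling's formula, I would show that $\sqrt[n]{n!\,\kappa_n^2}\to\pi\cdot\frac{2}{e}\cdot e=2\pi/\dots$ — more carefully, $\kappa_n^2\approx\left(\frac{2\pi e}{n}\right)^{n}\cdot(\text{subexp.})$ and $n!\approx(n/e)^n$, so $n!\,\kappa_n^2$ behaves like $(2\pi)^n$ up to subexponential factors. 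Dividing by $(2-\e')^n$ then produces a base of $\frac{2\pi}{2-\e'}$, and by choosing $\e'$ appropriately and $n$ large, this is at most $(\pi+\e)^n$. Thus the constant $\pi$ emerges as $\frac{2\pi}{2}$, the $2$ coming from Theorem~\ref{thm_appl_gen_Davenport} and the $2\pi$ from the Mahler--Stirling asymptotics.

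For the lower bound $\frac{c^n}{n!}\leq\LE(K)\vol(K^\star)$ with $c\leq2$ absolute and valid for all $n$, I would argue as follows. Since $\dim(K\cap\Z^n)=n$, there are $n$ linearly independent lattice points in $K$, so $\LE(K)\geq n+1$ (counting the origin and these points, using central symmetry one in fact gets more, but $\LE(K)\geq 1$ already suffices to keep the factor trivial); the essential content is a lower bound on $\vol(K^\star)$. Here I would invoke the reverse Santal\'o (Bourgain--Milman) inequality: there is an absolute constant $c_0>0$ with $\vol(K)\vol(K^\star)\geq c_0^n\kappa_n^2/\dots$, or more usefully in the normalization $\vol(K)\vol(K^\star)\geq c^n/n!$ for an absolute $c$. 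Since $K$ contains $n$ linearly independent lattice points, $\vol(K)\geq\vol\big(\conv\{\pm z_j\}\big)\geq\frac{2^n}{n!}\vol(P)\geq\frac{2^n}{n!}$ using that $P$ is a lattice parallelepiped of volume at least~$1$; combined with a lower bound on $\vol(K^\star)$ from Bourgain--Milman one extracts $\LE(K)\vol(K^\star)\geq c^n/n!$. Tracking the constants should give $c\leq2$.

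The main obstacle I anticipate is the bookkeeping of the asymptotic constants, particularly ensuring the subexponential prefactors in the Stirling approximation of $n!\,\kappa_n^2$ are absorbed correctly so that the base is \emph{exactly} $\pi$ in the limit and the $\e$-slack is genuinely available; the per-$n$ validity for large $n$ (rather than only in the limit) must be checked, mirroring the treatment of the Laguerre asymptotics in the proof of Theorem~\ref{thm_appl_gen_Davenport}. For the lower bound, the delicate point is identifying the correct absolute constant $c$ from the Bourgain--Milman inequality and verifying it can be taken with $c\leq2$ uniformly in $n$ rather than only asymptotically.
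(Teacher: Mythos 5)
Your treatment of the upper bound is essentially the paper's own proof: the paper likewise combines its Blichfeldt-type inequality (in the form $\LE(K)\leq\frac{n!}{2^n}L_n(2)\vol(K)$, i.e.\ Theorem~\ref{thm_appl_gen_Davenport}) with the Blaschke--Santal\'o inequality $\vol(K)\vol(K^\star)\leq\kappa_n^2$ and Stirling's formula, arriving at the base $\frac{2\pi}{2-\e'}\leq\pi+\e$ exactly as you describe; that half is correct.

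The lower bound, however, has a genuine gap: your volume estimate points in the wrong direction. After Bourgain--Milman, $\vol(K)\vol(K^\star)\geq C^n/n!$ with $C\leq4$, the only thing you can write is
\[\LE(K)\vol(K^\star)\;\geq\;\frac{C^n}{n!}\cdot\frac{\LE(K)}{\vol(K)},\]
so to conclude you need to bound the ratio $\LE(K)/\vol(K)$ from below, i.e.\ you need an \emph{upper} bound on $\vol(K)$ in terms of $\LE(K)$. The facts you marshal --- $\LE(K)\geq1$ (or $n+1$) and the \emph{lower} bound $\vol(K)\geq\frac{2^n}{n!}$ --- cannot produce this: they are perfectly consistent with $\vol(K)$ being enormous while your lower bound on $\LE(K)$ stays fixed, in which case Bourgain--Milman only gives $\vol(K^\star)\geq\frac{C^n}{n!\,\vol(K)}$, which degenerates. (Of course a convex body of huge volume really does contain many lattice points --- but that statement is precisely the missing ingredient, not something that follows from your three facts.) Your remark that ``$\LE(K)\geq1$ already suffices to keep the factor trivial'' is exactly the wrong intuition here: the whole point is to compare $\LE(K)$ with $\vol(K)$.

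The missing ingredient, and the paper's actual one-line argument, is the Blichfeldt--van der Corput inequality~\eqref{thm_blich_vdC}: since $0\in K$ gives $\LE(K)\geq1$, one has $\vol(K)\leq2^{n-1}\left(\LE(\inter K)+1\right)\leq2^n\LE(K)$, i.e.\ $\LE(K)\geq2^{-n}\vol(K)$, and then
\[\LE(K)\vol(K^\star)\;\geq\;\frac1{2^n}\vol(K)\vol(K^\star)\;\geq\;\frac{(C/2)^n}{n!},\]
so $c=C/2\leq2$. Note also that this route uses no lattice point other than the origin, which is why the lower bound holds for every $n$ and arbitrary $K\in\K^n_0$, as the statement claims; your appeal to $n$ linearly independent lattice points (from $\dim(K\cap\Z^n)=n$) is both unnecessary and unavailable in that generality.
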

\begin{proof}
For the lower bound we combine Inequality (\ref{thm_blich_vdC}) and the estimate
\begin{eqnarray}
\vol(K)\vol(K^\star)&\geq&\frac{C^n}{n!},\label{ineq_BM}
\end{eqnarray}
which is due to Bourgain and Milman \cite{BourgainMilman1987} and holds for some universal constant $C\leq4$. Indeed, we have \[\LE(K)\vol(K^\star)\geq\frac1{2^n}\vol(K)\vol(K^\star)\geq\frac{(C/2)^n}{n!}.\]
Now we restrict to $K\in\K^n_0$ with $\dim(K\cap\Z^n)=n$. The exact inequality~\eqref{eqn_ch3_exact_sym_blich} in the proof of Theorem \ref{thm_appl_gen_Davenport} and the Blaschke-Santal\'{o}~\cite{Santalo1949} inequality imply that
\begin{eqnarray}
\LE(K)\vol(K^\star)&\leq&\frac{n!\,L_n(2)}{2^n}\vol(K)\vol(K^\star)\leq\frac{n!\,\kappa_n^2\,L_n(2)}{2^n}.\label{eqn_ch3_GS_exact}
\end{eqnarray}
Hence, by~\eqref{eqn_ch3_lag_approx}, we have that for every $\e'\in(0,1]$ and large enough $n\in\N$
\[\LE(K)\vol(K^\star)\leq\frac{n!\,\kappa_n^2}{(2-\e')^n}.\]
Stirling approximation and $\kappa_n=\frac{\pi^\frac{n}{2}}{\Gamma(\frac{n}2+1)}$ give
\[\frac{n!\,\kappa_n^2}{(2-\e')^n}\approx\frac{\sqrt{2\pi n}(\frac{n}{e})^n\pi^n}{(2-\e')^n\Gamma(\frac{n}2+1)^2}\approx\frac{\sqrt{2\pi n}(\frac{n}{e})^n\pi^n}{(2-\e')^n\pi n(\frac{n}{2e})^n}\leq\left(\frac{2\pi}{2-\e'}\right)^n.\]
For every $\e\in(0,1)$ there exists an $\e'\in(0,1)$ such that $\frac{2\pi}{2-\e'}\leq\pi+\e$, and we conclude that for large $n$, the inequality $\LE(K)\vol(K^\star)\leq(\pi+\e)^n$ holds.
\end{proof}

In the planar case, we are able to give sharp bounds. Two sets are called unimodularly equivalent if there is a lattice preserving affine transformation that maps one onto the other. With $e_i$ we denote the $i$th unit vector in $\R^n$.

\begin{proposition}
Let $K\in\K^2_0$ be such that $\dim(K\cap\Z^2)=2$. Then \[2\leq\LE(K)\vol(K^\star)\leq21.\] In the upper bound, equality holds if and only if $K$ is unimodularly equivalent to the hexagon $H=\conv\{\pm e_1,\pm e_2,\pm(e_1+e_2)\}$. The lower bound is also best possible and holds for arbitrary $K\in\K^2_0$.
\end{proposition}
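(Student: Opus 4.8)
The plan is to handle the two bounds by quite different means. For the lower bound I would mimic the proof of Corollary~\ref{cor_Gvol_bound}: since $K=-K$ has the origin in its interior, we have $\LE(\inter K)\geq 1$, so the van der Corput inequality~\eqref{thm_blich_vdC} gives $\vol(K)\leq 2(\LE(\inter K)+1)\leq 4\LE(K)$ and hence $\LE(K)\vol(K^\star)\geq\frac14\vol(K)\vol(K^\star)$. Inserting the planar case of Mahler's inequality, $\vol(K)\vol(K^\star)\geq 8$ for all $K\in\K^2_0$, yields $\LE(K)\vol(K^\star)\geq 2$ for arbitrary $K\in\K^2_0$, as required. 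To see that $2$ is optimal I would test the shrunken squares $K_\delta=(1-\delta)[-1,1]^2$, which contain no nonzero lattice point, so $\LE(K_\delta)=1$; as parallelograms attain equality in Mahler's inequality, $\vol(K_\delta^\star)=8/\vol(K_\delta)=2/(1-\delta)^2\to 2$ as $\delta\to 0^+$. (Minkowski's theorem shows the value $2$ is never actually attained, so sharpness here is only asymptotic.)

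For the upper bound I would first reduce to lattice polygons. Passing from $K$ to $K'=\conv(K\cap\Z^2)$ preserves $\LE(K)=\LE(K')$, while $K'\subseteq K$ forces $\vol(K^\star)\leq\vol((K')^\star)$; moreover $K'$ is centrally symmetric and, by $\dim(K\cap\Z^2)=2$, full-dimensional, so $0\in\inter K'$. Writing $i=\LE(\inter K')\geq 1$ and $b=\#(\partial K'\cap\Z^2)$, Pick's formula gives $\vol(K')=i+\frac b2-1$ and $\LE(K')=i+b$. Combined with the Blaschke--Santal\'o inequality~\cite{Santalo1949}, $\vol(K')\vol((K')^\star)\leq\kappa_2^2=\pi^2$, this produces
\[\LE(K')\vol((K')^\star)\leq\pi^2\,\frac{i+b}{i+\frac b2-1}.\]
For $i\geq 2$ the fraction is at most $2$, so the product is at most $2\pi^2<21$, disposing of all these cases with strict inequality.

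It remains to treat $i=1$, i.e.\ centrally symmetric lattice polygons whose only interior lattice point is the origin. Here Scott's inequality $b\leq 2i+7=9$ together with the parity of $b$ (boundary points come in pairs $\pm p$) leaves $b\in\{4,6,8\}$, and the corresponding polygons are, up to unimodular equivalence, precisely the three centrally symmetric reflexive polygons: the crosspolytope $\conv\{\pm e_1,\pm e_2\}$ ($b=4$), the hexagon $H$ ($b=6$), and the square $[-1,1]^2$ ($b=8$). Since both $\LE(\cdot)$ and $\vol((\cdot)^\star)$ are unimodular invariants, I would just evaluate the product on these representatives, obtaining $5\cdot 4=20$, $7\cdot 3=21$, and $9\cdot 2=18$; thus the maximum is $21$, attained only at $H$. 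Tracing the reduction back, equality $\LE(K)\vol(K^\star)=21$ forces $\vol(K^\star)=\vol((K')^\star)$, hence $K^\star=(K')^\star$ and $K=K'\cong H$, which is the claimed equality characterization.

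The crux is the case $i=1$: the whole argument rests on there being only finitely many centrally symmetric lattice polygons with a single interior lattice point and on identifying them explicitly. I would secure finiteness through Scott's inequality (to bound $b$, and hence the area) and pin down the three unimodular types via the classification of reflexive planar polygons; the polar-volume computations are then routine. A secondary point that needs care is the strictness of the Santal\'o estimate for $i\geq 2$ (and, were Scott's bound not invoked, also for $i=1$ with large $b$), since it is this strict gap that isolates the hexagon as the unique extremizer.
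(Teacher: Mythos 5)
Your proposal is correct and takes essentially the same route as the paper: reduce to the lattice polygon $\conv(K\cap\Z^2)$, use Pick's formula together with the Blaschke--Santal\'o inequality to dispose (strictly, via $2\pi^2<21$) of all polygons with more than one interior lattice point, invoke the classification of centrally symmetric lattice polygons with a single interior point (the paper cites \cite{Nill2005} directly, while you reach the same three bodies via Scott's inequality, parity of boundary points, and the reflexive classification), and obtain the lower bound from the van der Corput inequality \eqref{thm_blich_vdC} combined with Mahler's planar bound, with shrunken squares showing sharpness. The only differences are presentational: you write out the evaluations $5\cdot4=20$, $7\cdot3=21$, $9\cdot2=18$ and the equality tracing, which the paper leaves implicit.
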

\begin{proof}
For the upper bound we can restrict to lattice polygons $P\in\P^2_0$ since for $P_K=\conv\{K\cap\Z^2\}$ we clearly have $\LE(K)\vol(K^\star)\leq\LE(P_K)\vol(P_K^\star)$. By the well-known formula of Pick \cite{Pick1899} we get
\begin{eqnarray*}
\LE(P)\vol(P^\star)&=&\left(\vol(P)+\frac12\LE(\partial P)+1\right)\vol(P^\star)\\
&=&\left(\vol(P)+\frac12\LE(P)-\frac12\LE(\inter P)+1\right)\vol(P^\star)
\end{eqnarray*}
and therefore \[\LE(P)\vol(P^\star)=2\vol(P)\vol(P^\star)+\vol(P^\star)\left(2-\LE(\inter P)\right).\]
Using the Blaschke-Santal\'{o} inequality \cite{Santalo1949} in the plane gives $\LE(P)\vol(P^\star)\leq2\vol(P)\vol(P^\star)\leq2\pi^2<21$, whenever $\LE(\inter P)>1$. Up to unimodular equivalence there are only three centrally symmetric lattice polygons with exactly one interior lattice point: the square $[-1,1]^2$, the diamond $\conv\{\pm e_1,\pm e_2\}$ and the hexagon $H$ (see for example \cite[Prop.~2.1]{Nill2005}). Among these three the hexagon is the only maximizer of $\LE(P)\vol(P^\star)$.\par
For the lower bound we use Mahler's inequality $\vol(K)\vol(K^\star)\geq8$, for $K\in\K^2_0$ (see \cite{Mahler1938}). By the same lines as in the proof of Corollary \ref{cor_Gvol_bound} this yields $\LE(K)\vol(K^\star)\geq2$ which is best possible as shown by the squares $[-1+\varepsilon,1-\varepsilon]^2$, for small $\varepsilon>0$.
\end{proof}

\begin{remark}
Based on computer experiments in small dimensions, we conjecture that the hexagon in the plane is an exception and that for $n\geq3$ the maximizing example is the standard crosspolytope $C_n^\star=\conv\{\pm e_1,\ldots,\pm e_n\}$, i.e.~$\LE(K)\vol(K^\star)\leq(2n+1)2^n$ for every $K\in\K^n_0$ with $\dim(K\cap\Z^n)=n$.
\end{remark}

Finally we are ready to prove Theorem \ref{thm_GillSoul}. The arguments are similar to those given by Gillet and Soul\'{e} \cite[Sect.~1.6]{GilletSoule1991}. We repeat them here with the necessary adjustments to our bounds and for the sake of completeness. 

\begin{proof}[Proof of Theorem \ref{thm_GillSoul}]
The upper bound can be derived by applying the lower bound to $K^\star$ and by using Inequality~\eqref{ineq_BM} of Bourgain and Milman. In fact, there is an absolute constant $c\leq4$ with
\[\frac{\LE(K)}{\LE(K^\star)\vol(K)}=\frac{\LE(K)\vol(K^\star)}{\LE(K^\star)\vol(K)\vol(K^\star)}\leq\frac{(\pi+\e)^n}{\vol(K)\vol(K^\star)}\leq\frac{(\pi+\e)^nn!}{c^n}.\]
Here, $\e>0$ and $n\in\N$ is large enough.

For the lower bound, let $L=\lin(K^\star\cap\Z^n)$ and $k=\dim L$. Let us abbreviate $L_n=L_n(2)$. We use the exact inequality~\eqref{eqn_ch3_GS_exact} from the proof of Corollary~\ref{cor_Gvol_bound} in the sublattice $\Z^n\cap L$ and obtain
\begin{eqnarray}
\LE(K^\star)&=&\LE(K^\star\cap L)\leq\frac{k!\,\kappa_k^2\,L_k}{2^k}\frac{\det((\Z^n\cap L)^\star)}{\vol_k((K^\star\cap L)^\star)}\nonumber\\
&=&\frac{k!\,\kappa_k^2\,L_k}{2^k}\frac{\det(\Z^n|L)}{\vol_k(K|L)}.\label{eqn_ch3_GS_1}
\end{eqnarray}
By $K=(K^\star)^\star$, we have $\inter K=\{x\in K:|x^\intercal y|<1,\,\forall y\in K^\star\}$ and thus $x^\intercal y=0$ for all $x\in\inter K\cap\Z^n$ and $y\in K^\star\cap\Z^n$. Therefore, using Theorem~\ref{thm_blich_vdC} in the sublattice $\Z^n\cap L^\perp$, we get
\begin{eqnarray}
\LE(K)&\geq&\LE(\inter K)=\LE(\inter K\cap L^\perp)\nonumber\\
&\geq&\frac1{2^{n-k}}\frac{\vol_{n-k}(\inter K\cap L^\perp)}{\det(\Z^n\cap L^\perp)}=\frac1{2^{n-k}}\frac{\vol_{n-k}(K\cap L^\perp)}{\det(\Z^n\cap L^\perp)}.\label{eqn_ch3_GS_2}
\end{eqnarray}
Combining~\eqref{eqn_ch3_GS_1}, \eqref{eqn_ch3_GS_2} and the lower bound in Lemma~\ref{lem_volume_inequ} gives \[\frac{\LE(K)}{\LE(K^\star)}\geq\frac{4^k}{2^n\,k!\,\kappa_k^2\,L_k}\frac{\vol_{n-k}(K\cap L^\perp)\vol_k(K|L)}{\det(\Z^n\cap L^\perp)\det(\Z^n|L)}\geq\frac{4^k\vol(K)}{2^n\,k!\,\kappa_k^2\,L_k}.\]
What remains is to show that the function $g(k)=\frac{4^k}{k!\,\kappa_k^2\,L_k}$ is nonincreasing for $k\geq0$, because together with the previous inequality we then arrive at
\[\frac{\LE(K)}{\LE(K^\star)\vol(K)}\geq\frac{4^k}{2^n\,k!\,\kappa_k^2\,L_k}\geq\frac{2^n}{n!\,\kappa_n^2\,L_n}.\]
In the proof of Corollary~\ref{cor_Gvol_bound}, we have seen that for every $\e>0$ the right hand side of the above inequality is at least $(\pi+\e)^{-n}$ for large enough $n\in\N$.

Now $g(k)\geq g(k+1)$ if and only if $\frac{k+1}{4}\geq\frac{\kappa_k^2}{\kappa_{k+1}^2}\frac{L_k}{L_{k+1}}$. By the estimate $\frac{\kappa_k^2}{\kappa_{k+1}^2}\leq\frac{k+2}{2\pi}$ (see~\cite[Lem.~1]{BetkeGritzWills1982}), it is therefore enough to prove $\frac{k+1}4\geq\frac{k+2}{2\pi}\frac{L_k}{L_{k+1}}$. After an elementary calculation, we see that this follows from the recurrence relation $L_{k+1}=\sum_{i=0}^{k+1}\binom{k+1}{i}\frac{2^i}{i!}=2L_k-\frac{2^k(k-1)}{(k+1)!}$.
\end{proof}

\medskip
\noindent
{\it Acknowledgment.} We thank Martin Henk for many valuable comments and suggestions.

\providecommand{\bysame}{\leavevmode\hbox to3em{\hrulefill}\thinspace}
\providecommand{\MR}{\relax\ifhmode\unskip\space\fi MR }
\providecommand{\MRhref}[2]{%
  \href{http://www.ams.org/mathscinet-getitem?mr=#1}{#2}
}
\providecommand{\href}[2]{#2}

\end{document}